\documentclass[12pt, reqno]{amsart}
 \usepackage{amsmath, amsthm, amscd, amsfonts, amssymb, graphicx, color, float}
\usepackage[bookmarksnumbered, colorlinks, plainpages]{hyperref}

\setlength{\textwidth}{6.5in} \setlength{\textheight}{8.5in}
\setlength{\evensidemargin}{-0.2in}\setlength{\oddsidemargin}{-0.2in}

\newtheorem{theorem}{Theorem}[section]
\newtheorem{lemma}[theorem]{Lemma}

\newtheorem{corollary}[theorem]{Corollary}
\theoremstyle{definition}

\theoremstyle{remark}
\newtheorem{remark}[theorem]{Remark}
\numberwithin{equation}{section}

\begin{document}

\title[Inequalities in noncommutative probability spaces]{Inequalities for sums of random variables in noncommutative probability spaces}
\author{Ghadir Sadeghi}
\address{$^1$ Department of Mathematics and Computer
Sciences, Hakim Sabzevari University, P.O. Box 397, Sabzevar, Iran}
\email{ghadir54@gmail.com, g.sadeghi@hsu.ac.ir}

\author{Mohammad Sal Moslehian}
\address{$^2$ Department of Pure Mathematics, Center of Excellence in
Analysis on Algebraic Structures (CEAAS), Ferdowsi University of
Mashhad, P. O. Box 1159, Mashhad 91775, Iran}
\email{moslehian@um.ac.ir, moslehian@member.ams.org}

\subjclass[2010]{Primary 46L53, 47A30; Secondary 60B20,60B11.}
\keywords{Noncommutative probability space, von Neumann algebra, noncommutative Rosenthal inequality, noncommutative Bennett inequality, noncommutative Hoeffding inequality.}

\begin{abstract}
In this paper, we establish an extension of a noncommutative Bennett inequality with a parameter $1\leq r\leq2$ and use it together with some noncommutative techniques to establish a Rosenthal inequality. We also present a noncommutative Hoeffding inequality as follows: Let $(\mathfrak{M}, \tau)$ be a noncommutative probability space, $\mathfrak{N}$ be a von Neumann subalgebra of $\mathfrak{M}$ with the corresponding conditional expectation $\mathcal{E}_{\mathfrak{N}}$ and let subalgebras $\mathfrak{N}\subseteq\mathfrak{A}_j\subseteq\mathfrak{M}\,\,(j=1, \cdots, n)$ be successively independent over $\mathfrak{N}$. Let $x_j\in\mathfrak{A}_j$ be self-adjoint such that $a_j\leq x_j\leq b_j$ for some real numbers $a_j<b_j$ and $\mathcal{E}_{\mathfrak{N}}(x_j)=\mu$ for some $\mu\geq 0$ and all $1\leq j\leq n$. Then for any $t>o$ it holds that
\begin{eqnarray*}
{\rm Prob}\left(\left|\sum_{j=1}^n x_j-n\mu\right|\geq t\right)\leq 2 \exp\left\{\frac{-2t^2}{\sum_{j=1}^n(b_j-a_j)^2}\right\}.
\end{eqnarray*}
\end{abstract}

\maketitle

\section{Introduction}

By a noncommutative probability space $(\mathfrak{M}, \tau)$ we mean a von Neumann algebra $\mathfrak{M}$ on a Hilbert
space $\mathfrak{H}$ with unit element $1$ equipped with a faithful normal finite trace $\tau$ such that $\tau(1)=1$. The modules $|x|$ of $x\in \mathfrak{M}$ is defined by continuous functional calculus as $|x|=(x^*x)^{1/2}$.

For each self-adjoint operator $x\in \mathfrak{M}$, there exists a unique spectral measure $E$ as a $\sigma$-additive mapping with respect to the strong operator topology from the Borel $\sigma$-algebra $\mathcal{B}(\mathbb{R})$ of $\mathbb{R}$ into the set of all orthogonal projections such that for every Borel function $f: \sigma(x)\to \mathbb{C}$ the operator $f(x)$ is defined by $f(x)=\int f(\lambda)dE(\lambda)$, in particular, $x=\int \lambda dE(\lambda)$ and $\chi_B(x)=\int_BdE(\lambda)=E(B)$. In addition,
\begin{eqnarray}\label{tc}
\tau\left(\chi_{[t,\infty)}\left(\left|x\right|\right)\right)
=\tau\left(\chi_{[t,\infty)}\left(x\right)\right)+\tau\left(\chi_{[t,\infty)}\left(-x\right)\right)\,.
\end{eqnarray}
Further, if $x \geq 0$ and $t>0$, then $\chi_{[t,\infty)}(x)t \leq x$. Hence we get the inequality
\begin{eqnarray*}
\tau(\chi_{[t,\infty)}(x))\leq t^{-1}\tau(x)\,,
\end{eqnarray*}
which is called the Markov inequality in the literature. For a self-adjoint element $x\in \mathfrak{M}$, it follows from the Markov inequality that $$\tau(\chi_{[t,\infty)}(x))=\tau(\chi_{[e^t,\infty)}(e^x))\leq e^{-t}\tau(e^x),$$
fromwhere we reach the exponential Chebyshev inequality as follows:
\begin{eqnarray}\label{GH1}
\tau(\chi_{[t,\infty)}(x))\leq e^{-t}\tau(e^x)\,.
\end{eqnarray}
As in the commutative context, we use the notation ${\rm Prob}(x\geq t):=\tau(\chi_{[t,\infty)}(x))$.

For any Borel set $A\subseteq\mathbb{R}$, we define $\nu(A)=\tau(E(A))$. Then $\nu$ is a scalar-valued spectral measure for $x$ and $\nu(\mathbb{R})=1$. In addition,
\begin{align}\label{MOS1}
\tau(f(x))=\tau\left(\int f(\lambda)dE(\lambda)\right)=\int f(\lambda)d\tau\left(E(\lambda)\right)=\int f(\lambda)d\nu(\lambda)\,.
\end{align}
By the measurable functional calculus \cite{CO} there is a $*$-homomorphism $\pi:L^{\infty}(\nu)\to\mathfrak{M}$ depending on $x$ such that $\pi(f)=f(x)$ for all $f\in L^{\infty}(\nu)$.

For $1\leq p<\infty$, $L^p(\mathfrak{M},\tau)$ is defined as the completion of $\mathfrak{M}$ with respect to the norm $\|x\|_p=\tau(|x|^p)^{\frac{1}{p}}$. Important special cases of these noncommutative spaces are the usual $L^p$-spaces and the Schatten $p$-classes. For further information we refer the reader to \cite{Y, P, MTS, SAD}.
Let $x\in\mathfrak{M}$ be positive. For $p\geq 1$ and positive $x\in\mathfrak{M}$, from \eqref{MOS1}, we have
\begin{eqnarray*}
\|x\|_p^p&=& \tau(x^p)=\int_0^\infty \lambda ^pd\nu(\lambda)=\int_0^{\infty} pt^{p-1}\nu([t,\infty))dt\\&=&\int_0^{\infty} pt^{p-1}\tau(\chi_{[t,\infty)}(x))dt.
\end{eqnarray*}

Let $\mathcal{P}$ be the lattice of projections of $\mathfrak{M}$. Set $p^{\perp}=1-p$ for $p\in \mathcal{P}$. Given a family of projections $(p_{\lambda})_{\lambda\in\Lambda}\subseteq\mathcal{P}$, we denote by $\vee_{\lambda\in\Lambda}p_{\lambda}$ (resp., $\wedge_{\lambda\in\Lambda}p_{\lambda}$) the projection from $\mathfrak{H}$ onto the closed subspace generated by $p_{\lambda}(\mathfrak{H})$ (resp. onto the subspace $\cap_{\lambda\in\Lambda}p(\mathfrak{H})$). Consequently, $\left(\vee_{\lambda\in\Lambda}p_{\lambda}\right)^{\perp}=\wedge_{\lambda\in\Lambda}p_{\lambda}^{\perp}$. Two projections $p$ and $q$ are said to be equivalent if there exists a partial isometry $u\in\mathfrak{M}$ such that $u^*u=p$ and $uu^*=q$. In this case, we
write $p\sim q$. If $p$ is equivalent to a projection $q_1 \leq q$, we write $p\prec q$. We need the following elementary properties of projections (see \cite{N}).
\begin{lemma}\label{NE}
Let $p$ and $q$ be two projection of $\mathfrak{M}$. Then\\
(i) $p\vee q-q\sim p-p\wedge q$.\\
(ii) If $p\wedge q=0$ then $p\prec q^{\perp}$\\
(iii) If $p$ and $q$ are equivalent projections in $\mathfrak{M}$ then $\tau(p)=\tau(q).$\\
(iv) If $(p_{\lambda})_{\lambda\in\Lambda}$ is a family of projections in $\mathfrak{M}$ then $\tau\left(\vee_{\lambda\in\Lambda}p_{\lambda}\right)\leq\sum_{\lambda\in\Lambda}\tau(p_{\lambda})$.
\end{lemma}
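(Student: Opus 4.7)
The heart of this lemma is the Kaplansky parallelogram identity (i); given (i) and the trace property of $\tau$, parts (ii)–(iv) follow quickly. I would proceed in the order (i), (iii), (ii), (iv).

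For (i) my plan is to use the polar decomposition of $p(1-q)\in\mathfrak{M}$. Writing $p(1-q)=v|p(1-q)|$, the partial isometry $v$ lies in $\mathfrak{M}$ and satisfies $v^{*}v=r\bigl((1-q)p(1-q)\bigr)$ and $vv^{*}=r\bigl(p(1-q)p\bigr)$, where $r(\cdot)$ denotes the range projection of a positive operator. The task is then to identify these two range projections as $p\vee q-q$ and $p-p\wedge q$, respectively. For the latter, I would observe that for $\xi\in p\mathfrak{H}$ one has $p(1-q)p\xi=0$ iff $p\xi=pq\xi$, which by comparing norms forces $q\xi=\xi$, so the kernel of $p(1-q)p$ meets $p\mathfrak{H}$ exactly in $(p\wedge q)\mathfrak{H}$; hence $r(p(1-q)p)=p-p\wedge q$. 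The analogous argument inside $(1-q)\mathfrak{M}(1-q)$ identifies $r((1-q)p(1-q))$ as $(1-q)-(1-q)\wedge(1-p)=(1-q)-(p\vee q)^{\perp}=p\vee q-q$, using the de Morgan relation $(p\vee q)^{\perp}=p^{\perp}\wedge q^{\perp}$ already noted in the excerpt.

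Parts (ii) and (iii) come almost for free. For (iii), if $u\in\mathfrak{M}$ is a partial isometry with $u^{*}u=p$ and $uu^{*}=q$, the trace property gives $\tau(p)=\tau(u^{*}u)=\tau(uu^{*})=\tau(q)$. For (ii), the hypothesis $p\wedge q=0$ turns (i) into $p=p-p\wedge q\sim p\vee q-q\le 1-q=q^{\perp}$, which is by definition $p\prec q^{\perp}$.

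For (iv) I would first treat two projections: since $q$ and $p\vee q-q$ are orthogonal and sum to $p\vee q$,
\begin{equation*}
\tau(p\vee q)=\tau(q)+\tau(p\vee q-q)=\tau(q)+\tau(p-p\wedge q)\le\tau(p)+\tau(q),
\end{equation*}
where the middle equality uses (i) and (iii) and the final inequality uses $\tau(p\wedge q)\ge 0$. A straightforward induction extends this to any finite subset $F\subset\Lambda$. For an arbitrary index set, the net $\bigl(\vee_{\lambda\in F}p_{\lambda}\bigr)_{F}$ of finite suprema is increasing and converges to $\vee_{\lambda\in\Lambda}p_{\lambda}$ in the strong operator topology, so normality of $\tau$ yields $\tau\bigl(\vee_{\lambda\in\Lambda}p_{\lambda}\bigr)=\sup_{F}\tau\bigl(\vee_{\lambda\in F}p_{\lambda}\bigr)\le\sum_{\lambda\in\Lambda}\tau(p_{\lambda})$. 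The main technical obstacle is the range-projection identification in (i); everything else is bookkeeping with the trace and its normality.
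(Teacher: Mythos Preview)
The paper does not actually prove this lemma; it merely states the properties and refers the reader to Nelson \cite{N}. Your proposal is correct and follows the standard textbook route: the polar decomposition argument for the Kaplansky parallelogram identity (i) is the classical one, and your derivations of (ii)--(iv) from (i) together with the trace property and normality of $\tau$ are exactly how these facts are usually deduced. There is nothing to compare against in the paper itself, but your write-up would serve perfectly well as a self-contained proof of the cited lemma.
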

Let $\mathfrak{N}$ be a von Neumann subalgebra of $\mathfrak{M}$. Then there exists a map $\mathcal{E}_{\mathfrak{N}}:\mathfrak{M}\to\mathfrak{N}$ satisfying the following properties:\\
(i) $\mathcal{E}_{\mathfrak{N}}$ is a normal contraction positive map projecting $\mathfrak{M}$ onto $\mathfrak{N}$;\\
(ii) $\mathcal{E}_{\mathfrak{N}}(axb) =a\mathcal{E}_{\mathfrak{N}}(x)b$ for any $x\in\mathfrak{M}$ and $a, b\in\mathfrak{N}$;\\
(iii) $\tau\circ\mathcal{E}_{\mathfrak{N}}=\tau$.\\
Moreover, $\mathcal{E}_{\mathfrak{N}}$ is the unique map satisfying (i) and (ii). The map $\mathcal{E}_{\mathfrak{N}}$ is called the conditional expectation of $\mathfrak{M}$ with respect to $\mathfrak{N}$. We say that two subalgebras $\mathfrak{N}\subseteq \mathfrak{A}, \mathfrak{B}\subseteq\mathfrak{M}$ are independent over $\mathfrak{N}$ if $\mathcal{E}_{\mathfrak{N}}(xy)=\mathcal{E}_{\mathfrak{N}}(x)\mathcal{E}_{\mathfrak{N}}(y)$ for all $x\in\mathfrak{A}, y\in\mathfrak{B}$. In particular, two random variable $X$ and $Y$ of a commutative von Neumann algebra $L^\infty(\mu)$ in which $\mu$ is a probability measure are independent if the algebras they generate are independent over the complex field $\mathbb{C}$. A sequence of subalgebras
$\mathfrak{N}\subseteq\mathfrak{A}_1,\ldots,\mathfrak{A}_n\subseteq\mathfrak{M}$ is
called successively independent over $\mathfrak{N}$ if $\mathfrak{A}_{k+1}$ is independent of the algebra $\mathfrak{M}(k)$ generated by $\mathfrak{A}_1,\ldots,\mathfrak{A}_k$. For further information the reader is refereed to \cite{Xu1, Xu2}.

In 1970, Rosenthal \cite{ROS} presented an inequality to describe isomorphic types of some subspaces in $L_p$-spaces. It indeed gives a bound for the $p$-norm of independent mean $0$ random variables. More precisely, it says that for any $p\geq 2$ there exists a constant $c(p)$ such that for any $n\in \mathbb{N}$ and any independent mean $0$ random variables $f_1, \cdots, f_n$ it holds that
\begin{equation}\label{MOS0}
{\mathbb E}\Biggl|\sum_{k=1}^n
f_k\Biggr|^p \le c(p) \Biggl( \Biggl( \sum_{k=1}^n {\mathbb E}|f_k|^2
\Biggr)^{p/2}+ \sum_{k=1}^n
{\mathbb E}|f_k|^p \Biggr)\,.
\end{equation}\
Burkholder \cite{BUR} generalized Rosenthal's inequality in the context of martingales. Since then this inequality has been generalized and applied by many mathematicians; see, e.g., \cite{JSZ, JX} and references therein. Recently, Junge and Zeng \cite{JZ} extended the Bennett and Bernstein inequalities to the noncommutative setting and derive a version of the Rosenthal inequality from Bernstein's inequality by using the properties of Gamma function. In probability theory, the Bennett inequality (Bernstein inequality, resp.) gives an upper bound on the probability that the sum of independent random variables deviates from its expected value (deviates from its mean, resp.) by more than a fixed amount, see \cite{BEN}.

In this paper, we establish an extension of the noncommutative Bennett inequality due to Junge and Zeng \cite{JZ} and use it together with some noncommutative techniques to prove the Rosenthal inequality with a parameter $1\leq r\leq2$. We also prove a noncommutative Hoeffding inequality. The Hoeffding inequality \cite{HOE} gives a probability bound for the deviation between the average of $n$ independent bounded random variables and its mean (see Corollary \ref{cor1}). There have been several generalizations and applications of this significant inequality, see \cite{1, 3}.


\section{Bennet inequality}

We provide an improved version of the noncommutative Bennett inequality based on the arguments of \cite[Theorem 01]{JZ}.
\begin{theorem} (Noncommutative Bennett inequality)\label{B}
Let $\mathfrak{N}\subseteq\mathfrak{A}_j\subseteq\mathfrak{M}$ be successively independent over $\mathfrak{N}$ and $x_j\in\mathfrak{A}_j$ be self-adjoint and $1\leq r\leq2$ such that
\begin{itemize}
\item $\mathcal{E}_{\mathfrak{N}}(x_j)\leq0$,

\item $\mathcal{E}_{\mathfrak{N}}(|x_j|^r)\leq b_j^r$,

\item $\|x_j\|\leq M$,

\end{itemize}
for some $M>0$ and all $1\leq j\leq n$. Then for each $t\geq0$,
\begin{align}\label{MOS2}
{\rm Prob}\left(\sum_{j=1}^nx_j\geq t\right)=\tau\left(\chi_{[t,\infty)}\left(\sum_{j=1}^nx_j\right)\right)\leq \exp\left(\frac{-b}{M^r}\Phi\left(\frac{tM^{r-1}}{b}\right)\right),
\end{align}
where $\Phi(\alpha)=(1+\alpha)\log(1+\alpha)-\alpha$ and $b=\sum_{j=1}^nb_j^r$.
\end{theorem}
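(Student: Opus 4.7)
The plan is to adapt the exponential Chebyshev/Chernoff scheme to the noncommutative setting, using successive independence to peel off terms one at a time. Setting $S_n=\sum_{j=1}^n x_j$, inequality \eqref{GH1} applied to $\lambda S_n$ for any $\lambda>0$ gives
\begin{align*}
{\rm Prob}\left(S_n\ge t\right)=\tau\left(\chi_{[\lambda t,\infty)}(\lambda S_n)\right)\le e^{-\lambda t}\tau(e^{\lambda S_n}),
\end{align*}
so the task reduces to producing a clean upper bound on $\tau(e^{\lambda S_n})$ and then choosing $\lambda$ optimally.

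The key single-variable estimate is a scalar inequality tailored to the full range $1\le r\le 2$. For $|u|\le M$ and $\lambda>0$ the function $\varphi(u):=e^{\lambda u}-1-\lambda u$ is nonnegative, so on $(0,M]$ one can write $\varphi(u)/u^r=\sum_{k\ge 2}\lambda^k u^{k-r}/k!$, which is increasing in $u$ because each exponent $k-r\ge 0$. For $u<0$ the ratio $\varphi(u)/|u|^r$ is dominated by its value at $-u$, which reduces to $\sinh(\lambda|u|)\ge\lambda|u|$. Consequently
\begin{align*}
e^{\lambda u}\le 1+\lambda u+\frac{e^{\lambda M}-1-\lambda M}{M^r}\,|u|^r\qquad(|u|\le M).
\end{align*}
Since $x_j$ is self-adjoint with $\|x_j\|\le M$, functional calculus lifts this to the operator inequality $e^{\lambda x_j}\le 1+\lambda x_j+M^{-r}(e^{\lambda M}-1-\lambda M)|x_j|^r$. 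Applying the positive, unital map $\mathcal{E}_{\mathfrak{N}}$, invoking $\mathcal{E}_{\mathfrak{N}}(x_j)\le 0$ and $\mathcal{E}_{\mathfrak{N}}(|x_j|^r)\le b_j^r$, and then using $1+y\le e^y$, I obtain $\mathcal{E}_{\mathfrak{N}}(e^{\lambda x_j})\le c_j\cdot 1$ where $c_j:=\exp\!\left(M^{-r}b_j^r(e^{\lambda M}-1-\lambda M)\right)$.

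Next I peel the variables off by combining Golden--Thompson with successive independence. Golden--Thompson gives $\tau(e^{\lambda S_n})\le\tau(e^{\lambda S_{n-1}}e^{\lambda x_n})$. Since $e^{\lambda S_{n-1}}\in\mathfrak{M}(n-1)$, the module property of $\mathcal{E}_{\mathfrak{M}(n-1)}$ and $\tau\circ\mathcal{E}_{\mathfrak{M}(n-1)}=\tau$ yield
\begin{align*}
\tau(e^{\lambda S_{n-1}}e^{\lambda x_n})=\tau\!\left(e^{\lambda S_{n-1}}\,\mathcal{E}_{\mathfrak{M}(n-1)}(e^{\lambda x_n})\right).
\end{align*}
Successive independence of $\mathfrak{A}_n$ from $\mathfrak{M}(n-1)$ over $\mathfrak{N}$ collapses the inner expectation to $\mathcal{E}_{\mathfrak{N}}(e^{\lambda x_n})\le c_n\cdot 1$, and the standard inequality $\tau(AY)\le\|Y\|\tau(A)$ for $A\ge 0$ then produces $\tau(e^{\lambda S_n})\le c_n\tau(e^{\lambda S_{n-1}})$. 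Iterating this step,
\begin{align*}
\tau(e^{\lambda S_n})\le\prod_{j=1}^n c_j=\exp\!\left(\frac{b}{M^r}(e^{\lambda M}-1-\lambda M)\right).
\end{align*}

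Combining with the Chebyshev step, I minimize the exponent $-\lambda t+(b/M^r)(e^{\lambda M}-1-\lambda M)$ in $\lambda>0$; differentiating yields the critical value $\lambda=M^{-1}\log(1+tM^{r-1}/b)$, and substituting with $\alpha:=tM^{r-1}/b$ collapses the exponent to $-(b/M^r)[(1+\alpha)\log(1+\alpha)-\alpha]=-(b/M^r)\Phi(\alpha)$, which is exactly \eqref{MOS2}. I expect the subtle parts to be (i) the ratio monotonicity argument that unlocks the full range $1\le r\le 2$ (Junge--Zeng's original bound effectively uses $r=2$), and (ii) justifying the Golden--Thompson/independence peeling cleanly when $\mathcal{E}_{\mathfrak{N}}(e^{\lambda x_n})$ need not commute with $e^{\lambda S_{n-1}}$, which is precisely why it is essential to first reduce $\mathcal{E}_{\mathfrak{N}}(e^{\lambda x_j})$ to a \emph{scalar} upper bound $c_j\cdot 1$ rather than an operator-valued one.
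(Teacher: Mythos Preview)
Your proposal is correct and follows essentially the same scheme as the paper: exponential Chebyshev, a single-variable moment-generating bound $\mathcal{E}_{\mathfrak{N}}(e^{\lambda x_j})\le\exp\bigl(b_j^rM^{-r}(e^{\lambda M}-1-\lambda M)\bigr)$, Golden--Thompson plus successive independence to iterate, and the same optimization in $\lambda$. The only cosmetic differences are that the paper obtains the single-variable bound by expanding the exponential series at the operator level (using $x_j^k\le|x_j|^k$ and $|x_j|^{k-r}\le\|x_j\|^{k-r}$, then replacing $\|x_j\|$ by $M$ via monotonicity of $s\mapsto(e^{\lambda s}-1-\lambda s)/s^r$), whereas you package this as a scalar inequality first and lift it by functional calculus; and the paper peels by applying $\mathcal{E}_{\mathfrak{N}}$ directly to the product and using $\mathcal{E}_{\mathfrak{N}}(ab)=\mathcal{E}_{\mathfrak{N}}(a)\mathcal{E}_{\mathfrak{N}}(b)$, whereas you route through $\mathcal{E}_{\mathfrak{M}(n-1)}$ and the module property before invoking independence to reduce to $\mathcal{E}_{\mathfrak{N}}$.
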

\begin{proof}
Let $\lambda\geq0$. We have
\begin{eqnarray*}
\mathcal{E}_{\mathfrak{N}}\left(e^{\lambda x_n}\right)&=&\mathcal{E}_{\mathfrak{N}}\left(\sum_{k=0}^{\infty}\frac{(\lambda x_n)^k}{k!}\right)
=\sum_{k=0}^{\infty}\frac{\lambda^k}{k!}\mathcal{E}_{\mathfrak{N}}(x_n^k)\\
&=&1+\mathcal{E}_{\mathfrak{N}}(x_n)+\sum_{k=2}^{\infty}\frac{\lambda^k}{k!}\mathcal{E}_{\mathfrak{N}}(x_n^k)\\
&\leq&1+\sum_{k=2}^{\infty}\frac{\lambda^k}{k!}\mathcal{E}_{\mathfrak{N}}(x_n^k)\qquad\qquad\qquad\quad ({\rm by~} \mathcal{E}_{\mathfrak{N}}(x_n)\leq 0)\\
&\leq&1+\sum_{k=2}^{\infty}\frac{\lambda^k}{k!}\mathcal{E}_{\mathfrak{N}}(|x_n|^k)\qquad\qquad\qquad ({\rm by~} x_n^k\leq|x_n|^k)\\
&=&1+\sum_{k=2}^{\infty}\frac{\lambda^k}{k!}\mathcal{E}_{\mathfrak{N}}(|x_n|^r|x_n|^{k-r})\\
&\leq&1+\sum_{k=2}^{\infty}\frac{\lambda^k}{k!}\|x_n\|^{k-r}\mathcal{E}_{\mathfrak{N}}(|x_n|^r)\\
&&\qquad\qquad\qquad\qquad({\rm by~} |x_n|^r|x_n|^{k-r}\leq \|x_n\|^{k-r}|x_n|^r)\\
&\leq&1+\sum_{k=2}^{\infty}\frac{\lambda^k}{k!}\|x_n\|^{k-r}b_n^r\\
&=&1+\frac{b_n^r}{\|x_n\|^{r}}(e^{\lambda \|x_n\|}-1-\lambda \|x_n\|)\\
&\leq&\exp\left(\frac{b_n^r}{\|x_n\|^{r}}(e^{\lambda \|x_n\|}-1-\lambda \|x_n\|)\right).
\end{eqnarray*}
Note that the function $f(s):=\exp\left(\frac{e^{\lambda s}-1-\lambda s}{s^r}\right)$ is increasing for $s> 0$. It follows that
\begin{eqnarray}\label{GH2}
\mathcal{E}_{\mathfrak{N}}\left(e^{\lambda x_n}\right)\leq \exp\left(\frac{b_n^r}{M^{r}}(e^{\lambda M}-1-\lambda M)\right)\,.
\end{eqnarray}

It follows from \eqref{GH1} that
\begin{align}\label{GH3}
\tau\left(\chi_{[t,\infty)}\left(\sum_{j=1}^nx_j\right)\right)&=\tau\left(\chi_{[\lambda t,\infty)}\left(\sum_{j=1}^n\lambda x_j\right)\right)\nonumber\\
&\leq \exp(-\lambda t)\tau\left(e^{\sum_{j=1}^n\lambda x_j}\right).
\end{align}
Recall that the Golden--Thompson inequality \cite{RUS} stating that for all self-adjoint elements $z_1, z_2 \in \mathfrak{M}$,
\begin{eqnarray}\label{T1}
\tau(e^{z_1+z_2})\leq \tau(e^{z_1} e^{z_2})\,.
\end{eqnarray}
Hence
\begin{eqnarray*}
\tau\left(e^{\sum_{j=1}^n\lambda x_j}\right)&\leq&\tau\left(e^{\sum_{j=1}^{n-1}\lambda x_j}e^{\lambda x_n}\right) \qquad\qquad\qquad\qquad\qquad\quad({\rm by~ \eqref{T1}})\\
&=&\tau\left(\mathcal{E}_{\mathfrak{N}}\left(e^{\sum_{j=1}^{n-1}\lambda x_j}e^{\lambda x_n}\right)\right)\\
&=&\tau\left(\mathcal{E}_{\mathfrak{N}}\left(e^{\sum_{j=1}^{n-1}\lambda x_j}\right)\mathcal{E}_{\mathfrak{N}}\left(e^{\lambda x_n}\right)\right)\\
&\leq& \exp\left(\frac{b_n^r}{M^{r}}(e^{\lambda M}-1-\lambda M)\right)\tau\left(\mathcal{E}_{\mathfrak{N}}\left(e^{\sum_{j=1}^{n-1}\lambda x_j}\right)\right)\\
&&\qquad\qquad\qquad\qquad\qquad\quad({\rm by~ \eqref{GH2} ~and~traciality~of~\tau})\\
&\leq&\exp\left(\frac{\sum_{j=1}^nb_j^r}{M^{r}}(e^{\lambda M}-1-\lambda M)\right)\\
&&\qquad \qquad \qquad \qquad \qquad \qquad {\rm (by~iterating~} n-2 {\rm ~times)}
\end{eqnarray*}
We infer from the latter inequality together with \eqref{GH3} that
\begin{eqnarray*}
\tau\left(\chi_{[t,\infty)}\left(\sum_{j=1}^nx_j\right)\right)\leq\exp\left(-\lambda t+\frac{\sum_{j=1}^nb_j^r}{M^{r}}(e^{\lambda M}-1-\lambda M)\right).
\end{eqnarray*}
By basic calculus method we find the minimizing value
\begin{eqnarray*}
\lambda=\frac{1}{M}\log\left(1+\frac{tM^{r-1}}{\sum_{j=1}^nb_j^r}\right),
\end{eqnarray*}
which yields \eqref{MOS2}.
\end{proof}
Since both inequalities $\Phi(t)\geq \frac{t^2}{2+2t/3}$ and $\Phi(t)\geq\frac{t}{2}{\rm arcsinh}(\frac{t}{2})$ are valid for all $t\geq0$, so one can get the Bernstein and Prohorov inequalities from the Bennett's inequality as follows.
\begin{corollary}
Under the same hypothesis of Theorem \ref{B},
\begin{eqnarray}\label{BER}
\tau\left(\chi_{[t,\infty)}\left(\sum_{j=1}^nx_j\right)\right)\leq \exp\left(-\frac{t^2M^{r-2}}{2b+(2/3)tM^{r-1}}\right)
\end{eqnarray}
and
\begin{eqnarray*}
\tau\left(\chi_{[t,\infty)}\left(\sum_{j=1}^nx_j\right)\right)\leq \exp\left(-\frac{t}{2M}{\rm arcsinh}\left(\frac{tM}{2b}\right)\right)
\end{eqnarray*}
\end{corollary}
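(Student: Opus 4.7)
The plan is to specialize the noncommutative Bennett bound from Theorem \ref{B} using two elementary scalar inequalities for the function $\Phi(\alpha)=(1+\alpha)\log(1+\alpha)-\alpha$ quoted just before the statement. Since $u\mapsto e^{-u}$ is decreasing in $u\geq 0$, any pointwise lower bound $\Phi(\alpha)\geq \psi(\alpha)\geq 0$ substituted into
\[
\tau\!\left(\chi_{[t,\infty)}\!\left(\sum_{j=1}^n x_j\right)\right)\leq \exp\!\left(-\frac{b}{M^{r}}\,\Phi(\alpha)\right),\qquad \alpha=\frac{tM^{r-1}}{b},
\]
immediately yields an upper bound of the form $\exp\!\left(-(b/M^{r})\,\psi(\alpha)\right)$. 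So after invoking Theorem \ref{B} once, the remaining work is purely algebraic manipulation of scalar expressions.

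For the Bernstein-type estimate \eqref{BER}, I would insert $\Phi(\alpha)\geq \alpha^{2}/(2+2\alpha/3)$. With $\alpha=tM^{r-1}/b$, multiplying numerator and denominator by $b/M^{r-1}$ consolidates the $b$'s and $M$'s and produces precisely the claimed expression $t^{2}M^{r-2}/(2b+(2/3)tM^{r-1})$ in the exponent.

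For the Prohorov-type estimate I would use $\Phi(\alpha)\geq (\alpha/2){\rm arcsinh}(\alpha/2)$. The prefactor $b/M^{r}$ cancels against $\alpha/2=tM^{r-1}/(2b)$ to leave $t/(2M)$, while the ${\rm arcsinh}$ retains argument $\alpha/2$, matching the stated exponent.

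No genuine obstacle is anticipated: the only nontrivial ingredient is the two scalar inequalities for $\Phi$ on $[0,\infty)$, but these are standard one-variable calculus facts (both sides vanish at $\alpha=0$, and the derivative comparison reduces to checking that $\Phi'(\alpha)=\log(1+\alpha)$ dominates the derivative of each proposed lower bound). Hence the corollary is essentially a transcription of Theorem \ref{B} through the two scalar bounds.
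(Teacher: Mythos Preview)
Your proposal is correct and mirrors the paper's own argument exactly: the paper simply remarks that the two scalar inequalities $\Phi(\alpha)\geq \alpha^{2}/(2+2\alpha/3)$ and $\Phi(\alpha)\geq (\alpha/2)\,{\rm arcsinh}(\alpha/2)$ for $\alpha\geq 0$, substituted into \eqref{MOS2}, yield the two displayed bounds. One small caveat: carrying out the Prohorov algebra with $\alpha=tM^{r-1}/b$ actually gives ${\rm arcsinh}\!\left(tM^{r-1}/(2b)\right)$ rather than the printed ${\rm arcsinh}\!\left(tM/(2b)\right)$, so the match is exact only when $r=2$; this is a typographical slip in the stated corollary, not a flaw in your approach.
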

We can immediately deduce the following commutative Bernstein's Inequality.
\begin{corollary}
Let $X_1, \cdots, X_n$ be independent Bernoulli random variables taking values $1$ and $-1$ with probability $1/2$. Then
$${\rm Prob}\left(\left|\frac{1}{n}\sum_{i=1}^nX_i\right|\geq \varepsilon\right) \leq 2e^{\frac{-n\varepsilon^2}{2+(2\varepsilon/3)}}\,.$$
\end{corollary}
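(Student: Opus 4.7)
The plan is to realize the Bernoulli random variables inside a commutative noncommutative probability space and then invoke the noncommutative Bernstein inequality \eqref{BER} with the specific parameters $r=2$, $M=1$, $b_j=1$ for each one-sided tail, and finally combine the two tails.

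First, I would take $(\mathfrak{M},\tau)=(L^\infty(\Omega,\mu),\mathbb{E})$ where $(\Omega,\mu)$ is the underlying probability space carrying $X_1,\dots,X_n$, and choose $\mathfrak{N}=\mathbb{C}\cdot 1$ so that the conditional expectation $\mathcal{E}_{\mathfrak{N}}$ is simply the scalar expectation $\mathbb{E}$. Let $\mathfrak{A}_j$ be the (finite-dimensional) von Neumann subalgebra generated by $X_j$. Classical independence of the $X_j$'s is precisely successive independence of the $\mathfrak{A}_j$'s over $\mathbb{C}$ in the sense defined in the introduction. Since $X_j$ is self-adjoint with $\mathbb{E}(X_j)=0$, $|X_j|^2=1$ and $\|X_j\|_\infty=1$, the hypotheses of Theorem~\ref{B} are satisfied with $r=2$, $b_j=1$ (so $b=n$) and $M=1$.

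Next, I would apply the Bernstein version \eqref{BER} to the family $\{X_j\}$ at the threshold $t=n\varepsilon$, which yields
\begin{eqnarray*}
{\rm Prob}\!\left(\sum_{j=1}^{n}X_j\geq n\varepsilon\right)
\leq \exp\!\left(-\frac{(n\varepsilon)^2}{2n+(2/3)n\varepsilon}\right)
= \exp\!\left(-\frac{n\varepsilon^2}{2+(2\varepsilon/3)}\right).
\end{eqnarray*}
Since the variables $-X_1,\dots,-X_n$ satisfy exactly the same hypotheses (they are also $\pm 1$-valued symmetric Bernoullis), applying \eqref{BER} again to $\{-X_j\}$ gives the matching lower-tail bound
\begin{eqnarray*}
{\rm Prob}\!\left(\sum_{j=1}^{n}X_j\leq -n\varepsilon\right)
\leq \exp\!\left(-\frac{n\varepsilon^2}{2+(2\varepsilon/3)}\right).
\end{eqnarray*}

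Finally, a union bound on the two complementary events, together with $\{|\tfrac{1}{n}\sum X_i|\geq\varepsilon\}=\{\sum X_i\geq n\varepsilon\}\cup\{\sum X_i\leq -n\varepsilon\}$, produces the claimed factor $2$ and the stated inequality. There is no real obstacle: the only points worth care are the identification of classical independence with successive independence over $\mathbb{C}$ (which is immediate once one unwinds the definition of $\mathcal{E}_{\mathbb{C}}=\mathbb{E}$), and the symmetry argument used to get a two-sided bound with the doubling constant $2$.
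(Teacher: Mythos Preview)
Your proposal is correct and is exactly the immediate deduction the paper has in mind: specialize the Bernstein bound \eqref{BER} to the commutative setting with $r=2$, $M=1$, $b_j=1$ (hence $b=n$), evaluate at $t=n\varepsilon$, and double via the symmetry $X_j\mapsto -X_j$. The paper gives no separate argument for this corollary, so there is nothing further to compare.
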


\section{Hoeffding inequality}

In this section we provide a noncommutative version of Hoeffding's inequality and present some consequences.
\begin{theorem} (Noncommutative Hoeffding inequality)\label{H}
Let $\mathfrak{N}\subseteq\mathfrak{A}_j\subseteq\mathfrak{M}$ be successively independent over $\mathfrak{N}$ and let $x_j\in\mathfrak{A}_j$ be self-adjoint such that $a_j\leq x_j\leq b_j$ for some real numbers $a_j<b_j$ and $\mathcal{E}_{\mathfrak{N}}(x_j)=\mu$ for some $\mu\geq 0$ and all $1\leq j\leq n$. Then
\begin{eqnarray}\label{MOS222}
{\rm Prob}\left(\left|S_n-n\mu\right|\geq t\right)\leq 2 \exp\left\{\frac{-2t^2}{\sum_{j=1}^n(b_j-a_j)^2}\right\}.
\end{eqnarray}
for any $t>0$, where $S_n=\sum_{j=1}^n x_j$.
\end{theorem}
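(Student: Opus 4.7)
The plan is to adapt the classical exponential-moment method, using the proof of Theorem \ref{B} as a blueprint. I first reduce to the centered variables $y_j := x_j - \mu \in \mathfrak{A}_j$, which satisfy $\mathcal{E}_{\mathfrak{N}}(y_j) = 0$ and $\alpha_j \leq y_j \leq \beta_j$ for $\alpha_j := a_j - \mu$ and $\beta_j := b_j - \mu$, so that $\beta_j - \alpha_j = b_j - a_j$ and (applying $\mathcal{E}_{\mathfrak{N}}$) $\alpha_j \leq 0 \leq \beta_j$. The symmetric two-sided shape of \eqref{MOS222} is produced only at the end by applying the one-sided bound to both $\sum_j y_j$ and $-\sum_j y_j$ and combining through \eqref{tc}, which provides the factor $2$.

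The core ingredient is a noncommutative version of Hoeffding's lemma: for every $\lambda \in \mathbb{R}$ and each $j$,
\[
\mathcal{E}_{\mathfrak{N}}\!\left(e^{\lambda y_j}\right) \leq \exp\!\left(\tfrac{\lambda^2 (b_j - a_j)^2}{8}\right).
\]
To establish it I start from the scalar convexity estimate $e^{\lambda t} \leq \tfrac{\beta_j - t}{\beta_j - \alpha_j}\, e^{\lambda \alpha_j} + \tfrac{t - \alpha_j}{\beta_j - \alpha_j}\, e^{\lambda \beta_j}$, valid on $[\alpha_j,\beta_j]$. Since $\sigma(y_j) \subseteq [\alpha_j, \beta_j]$, functional calculus promotes this to an operator inequality for $y_j$; then applying the positive map $\mathcal{E}_{\mathfrak{N}}$ and using $\mathcal{E}_{\mathfrak{N}}(y_j) = 0$ collapses the right side to the scalar $\tfrac{\beta_j}{\beta_j - \alpha_j} e^{\lambda \alpha_j} + \tfrac{-\alpha_j}{\beta_j-\alpha_j} e^{\lambda \beta_j}$, which the classical Taylor bound (estimating $g''$ by $(\beta_j - \alpha_j)^2/4$) majorizes by $\exp(\lambda^2 (\beta_j - \alpha_j)^2/8)$.

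With the lemma in place, the rest transcribes the proof of Theorem \ref{B}. For $\lambda > 0$, the exponential Chebyshev inequality \eqref{GH1} gives $\tau(\chi_{[t,\infty)}(\sum_j y_j)) \leq e^{-\lambda t}\, \tau(e^{\lambda \sum_j y_j})$. I then invoke Golden--Thompson \eqref{T1} to peel off $e^{\lambda y_n}$, rewrite the trace via $\tau = \tau \circ \mathcal{E}_{\mathfrak{N}}$, factor $\mathcal{E}_{\mathfrak{N}}(e^{\lambda \sum_{j<n} y_j} e^{\lambda y_n}) = \mathcal{E}_{\mathfrak{N}}(e^{\lambda \sum_{j<n} y_j})\, \mathcal{E}_{\mathfrak{N}}(e^{\lambda y_n})$ by successive independence, pull the (now scalar) Hoeffding-lemma bound out of the trace, and iterate $n-1$ more times to obtain
\[
\tau\!\left(e^{\lambda \sum_{j=1}^n y_j}\right) \leq \exp\!\left(\tfrac{\lambda^2}{8} \sum_{j=1}^n (b_j - a_j)^2\right).
\]

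Optimizing in $\lambda$ with $\lambda = 4t / \sum_j (b_j-a_j)^2$ yields the one-sided estimate $\exp(-2t^2 / \sum_j (b_j-a_j)^2)$; running the same argument with $-y_j$ (whose spectral window has the same width) and then summing the two one-sided bounds via \eqref{tc} gives \eqref{MOS222}. The main obstacle is the noncommutative Hoeffding lemma: one must check that the scalar convexity estimate lifts correctly through functional calculus and then, crucially, that the condition $\mathcal{E}_{\mathfrak{N}}(y_j) = 0$ together with positivity of $\mathcal{E}_{\mathfrak{N}}$ reduces the resulting operator expression to a genuine scalar, so that it can be pulled outside the trace during the Bennett-style iteration.
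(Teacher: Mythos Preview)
Your proposal is correct and follows essentially the same route as the paper: center to $y_j=x_j-\mu$, establish the noncommutative Hoeffding lemma $\mathcal{E}_{\mathfrak{N}}(e^{\lambda y_j})\leq \exp(\lambda^2(b_j-a_j)^2/8)$ via the chord-above-graph inequality for $e^{\lambda t}$, functional calculus, positivity of $\mathcal{E}_{\mathfrak{N}}$, and the Taylor estimate, then run the Golden--Thompson/independence iteration of Theorem~\ref{B}, optimize in $\lambda$, and combine the two one-sided bounds through \eqref{tc}. The only cosmetic difference is that the paper writes the last step as ${\rm Prob}(|S_n-n\mu|\geq t)=2\,{\rm Prob}(S_n-n\mu\geq t)$, whereas you (more carefully) bound the $\pm$ tails separately before summing.
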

\begin{proof}
First we show that if $x\in\mathfrak{A}_j$ is self-adjoint such that $a \leq x\leq b$ and $\mathcal{E}_{\mathfrak{N}}(x)=0$, then
\begin{eqnarray}\label{H1}
\mathcal{E}_{\mathfrak{N}}(e^{sx})\leq \exp\left\{\frac{s^2(b-a)^2}{8}\right\}
\end{eqnarray}
for any $s>0$.\\
Let $s>0$. Note that $t\mapsto e^{ts}$ is convex, therefore for any $a\leq \alpha\leq b$,
\begin{eqnarray*}
e^{s\alpha}\leq e^{sb}\frac{\alpha-a}{b-a}+e^{sa}\frac{b-\alpha}{b-a}\,.
\end{eqnarray*}
By the functional calculus we have
\begin{eqnarray*}
e^{sx}\leq e^{sb}\frac{x-a}{b-a}+e^{sa}\frac{b-x}{b-a}.
\end{eqnarray*}
Since $\mathcal{E}_{\mathfrak{N}}$ is a positive map and $\mathcal{E}_{\mathfrak{N}}(x)=0$, we reach
\begin{eqnarray*}
\mathcal{E}_{\mathfrak{N}}(e^{sx})\leq \frac{-a}{b-a}e^{sb}+\frac{b}{b-a}e^{sa}=e^{h(\alpha)},
\end{eqnarray*}
where $\alpha=s(b-a)$, $h(\alpha)=-\gamma \alpha+\log(1-\gamma+\gamma e^{\alpha})$ and $\gamma=-a/(b-a)$. In addition, $h(0)=h'(0)=0$ and $h''(\alpha)\leq \frac{1}{4}$ for all $\alpha>0$. By Taylor's Theorem there exists a real number $\xi\in(0,\alpha)$ such that
$$h(\alpha)=h(0)+\alpha h'(0)+\frac{\alpha^2}{2}h''(\xi)\leq\frac{\alpha^2}{8}=\frac{s^2(b-a)^2}{8}\,.$$
Hence
\begin{eqnarray*}
\mathcal{E}_{\mathfrak{N}}(e^{sx})\leq \exp\left\{\frac{s^2(b-a)^2}{8}\right\}\,.
\end{eqnarray*}

Second, for arbitrary value of $\mathcal{E}_{\mathfrak{N}}(x)$, setting $y:=x-\mu$ we get $a-\mu \leq y\leq b-\mu$ and $\mathcal{E}_{\mathfrak{N}}(y)=0$. Employing \eqref{H1} we reach
\begin{eqnarray}\label{H2}
\mathcal{E}_{\mathfrak{N}}\left(e^{s(x-\mu)}\right) \leq \exp\left\{\frac{s^2(b-a)^2}{8}\right\}\,.
\end{eqnarray}

Next, by the same reasoning as in the proof of Theorem \ref{B}, we get from \eqref{H2} that
\begin{eqnarray*}
\tau\left(e^{\lambda\sum_{j=1}^n(x_j-\mu)}\right)&\leq& \tau\left(\mathcal{E}_{\mathfrak{N}}\left(e^{\lambda\sum_{j=1}^{n-1}(x_j-\mu)}\right)\mathcal{E}_{\mathfrak{N}}\left(e^{\lambda (x_n-\mu)}\right)\right)\\
&\leq& e^{\frac{\lambda ^2(b_n-a_n)^2}{8}}\tau\left(\mathcal{E}_{\mathfrak{N}}\left(e^{\lambda\sum_{j=1}^{n-1}(x_j-\mu)}\right)\right)\leq \cdots\\
&\leq& \exp\left\{\frac{\lambda^2\sum_{j=1}^n(b_j-a_j)^2}{8}\right\}\,.
\end{eqnarray*}
Therefore \eqref{tc} and the exponential Chebyshev inequality \eqref{GH1} yield that
\begin{eqnarray*}
{\rm Prob}(\left|S_n-n\mu\right|\geq t)&=&2{\rm Prob}(S_n- n\mu\geq t)\\
&\leq& 2e^{-\lambda t}\exp\left\{\frac{\lambda^2\sum_{j=1}^n(b_j-a_j)^2}{8}\right\}.
\end{eqnarray*}
This is minimized when $\lambda=\frac{4t}{\sum_{j=1}^n(b_j-a_j)^2}$. Thus
\begin{eqnarray*}
{\rm Prob}(\left|S_n-n\mu\right|\leq 2\exp\left\{\frac{-2t^2}{\sum_{j=1}^n(b_j-a_j)^2}\right\}\,,
\end{eqnarray*}
which is the desired inequality.
\end{proof}
\begin{remark}
Under the same hypothesis of Theorem \ref{H} we can show that the bound in the Hoeffding inequality \eqref{MOS222} is sharper than that in the Bernstein inequality \eqref{BER} for $r=2$. Let us assume that We may assume that $\mathcal{E}_{\mathfrak{N}}(x_j)=0$ and $-1 \leq x_j \leq 1$. By the functional calculus $ |x_j|\leq 1$ so
$\mathcal{E}_{\mathfrak{N}}(|x_j|^2)\leq 1$ and $\|x_j\|\leq 1$. So $b=n$ and $M=1$ in the notation of Theorem \ref{B}. Then Heoffding inequality gives rise to
\begin{eqnarray*}
{\rm Prob}\left(\sum_{j=1}^nx_j\geq t\right) \leq \exp\left\{\frac{-t^2}{2n}\right\}\,.
\end{eqnarray*}
and from Bernstein inequality we have
\begin{eqnarray*}
{\rm Prob}\left(\sum_{j=1}^nx_j\geq t\right) \leq \exp\left\{\frac{-t^2}{2n+(2t/3)}\right\}\,.
\end{eqnarray*}
\end{remark}
The next result is the classical (commutative) version of the Hoeffding inequality.
\begin{corollary}[Hoeffding's Inequality \cite{HOE}]\label{cor1}
Let $a \leq X_1, \cdots, X_n\leq b$ be independent random variables with the expectation $\mathbb{E}(X_i)=\mu\,\,(i=1, \cdots, n)$. If $\overline{X}_n=\left(\sum_{i=1}^nX_i\right)/n$, then
$${\rm Prob}(\left|\overline{X}_n-\mu\right|\geq t) \leq 2e^{-2nt^2/(b-a)^2}\,.$$
\end{corollary}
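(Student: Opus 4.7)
The plan is to specialize Theorem \ref{H} to the commutative setting. I would take $\mathfrak{M}=L^{\infty}(\Omega,\mathbb{P})$ acting on $\mathfrak{H}=L^{2}(\Omega,\mathbb{P})$ by multiplication, with the trace $\tau(f)=\mathbb{E}(f)=\int f\,d\mathbb{P}$. This is a finite von Neumann algebra, and $(\mathfrak{M},\tau)$ is a (commutative) noncommutative probability space in the sense of the introduction. For the subalgebra I would choose $\mathfrak{N}=\mathbb{C}\cdot 1$, so that the conditional expectation $\mathcal{E}_{\mathfrak{N}}$ is exactly the scalar expectation $\mathbb{E}$, and I would let $\mathfrak{A}_j$ be the (abelian) von Neumann subalgebra generated by $X_j$. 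Classical (mutual) independence of $X_1,\ldots,X_n$ is precisely successive independence of $\mathfrak{A}_1,\ldots,\mathfrak{A}_n$ over $\mathbb{C}$, since $\mathcal{E}_{\mathfrak{N}}(XY)=\mathbb{E}(XY)=\mathbb{E}(X)\mathbb{E}(Y)=\mathcal{E}_{\mathfrak{N}}(X)\mathcal{E}_{\mathfrak{N}}(Y)$ for independent bounded random variables.

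Next I would verify the hypotheses of Theorem \ref{H} with $a_j=a$ and $b_j=b$ for every $j$: the bound $a\le X_j\le b$ in the scalar sense is exactly the operator inequality $a\cdot 1\le X_j\le b\cdot 1$ (multiplication operators inherit their spectrum from the essential range), and $\mathcal{E}_{\mathfrak{N}}(X_j)=\mathbb{E}(X_j)=\mu$ by assumption. If $\mu<0$, I would first reduce to the case $\mu\ge 0$ (as required by Theorem \ref{H}) by translating $X_j\mapsto X_j+c$ for a sufficiently large constant $c$; this shift leaves both sides of the desired inequality invariant, since it shifts $\overline{X}_n$ and $\mu$ by the same amount and does not affect $b-a$.

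Applying Theorem \ref{H} with $S_n=\sum_{j=1}^n X_j$ then gives, for every $s>0$,
\begin{eqnarray*}
{\rm Prob}(|S_n-n\mu|\ge s)\le 2\exp\!\left\{\frac{-2s^{2}}{\sum_{j=1}^{n}(b_j-a_j)^{2}}\right\}
= 2\exp\!\left\{\frac{-2s^{2}}{n(b-a)^{2}}\right\}.
\end{eqnarray*}
Substituting $s=nt$ and using the equivalence $|S_n-n\mu|\ge nt\iff |\overline{X}_n-\mu|\ge t$ yields the bound $2\exp\{-2nt^{2}/(b-a)^{2}\}$, which is exactly Corollary \ref{cor1}.

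I do not expect any serious obstacle here; the proof is essentially a dictionary check. The only point that requires any attention is the sign assumption $\mu\ge 0$ in Theorem \ref{H}, which is handled by the elementary translation argument above, together with verifying that classical independence really does correspond to the noncommutative notion of independence over $\mathbb{C}$ used earlier in the paper. Everything else is immediate once the commutative probability space is recast as a (commutative) tracial von Neumann algebra.
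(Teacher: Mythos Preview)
Your proposal is correct and is exactly the intended specialization: the paper gives no explicit proof of this corollary, simply presenting it as the commutative instance of Theorem \ref{H}, and your argument (identifying $\mathfrak{M}=L^\infty(\Omega,\mathbb{P})$, $\mathfrak{N}=\mathbb{C}\cdot 1$, $\mathfrak{A}_j$ the algebra generated by $X_j$, then setting $a_j=a$, $b_j=b$ and substituting $s=nt$) is precisely that reduction, carried out with more care than the paper itself supplies. Your translation trick to handle the case $\mu<0$ is a nice touch that the paper glosses over entirely.
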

In the special case, we immediately get
\begin{corollary}\label{cor1}
Let $0 \leq X_1, \cdots, X_n\leq 1$ be independent random variables with common mean $\mu$. Then with probability at least $1-\varepsilon$,
$$\left|\frac{1}{n}\sum_{i=1}^nX_i-\mu\right| \leq \sqrt{\frac{\log(2/\varepsilon)}{2n}}\,.$$
\end{corollary}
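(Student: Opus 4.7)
The plan is to derive this statement as a direct specialization of the preceding commutative Hoeffding inequality (Corollary above), which itself follows from Theorem \ref{H} by taking $\mathfrak{M} = L^\infty(\Omega,\mathbb{P})$ with the trace given by integration, the subalgebra $\mathfrak{N} = \mathbb{C}$, and the $\mathfrak{A}_j$ generated by the individual $X_j$.

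First I would apply the commutative Hoeffding inequality to $X_1,\dots,X_n$ with $a=0$ and $b=1$, obtaining
\begin{eqnarray*}
{\rm Prob}\left(\left|\overline{X}_n-\mu\right|\geq t\right)\leq 2e^{-2nt^2}
\end{eqnarray*}
for any $t>0$. Next I would convert the tail bound into the desired high-probability statement by choosing $t$ so that the right-hand side equals $\varepsilon$. Setting $2e^{-2nt^2}=\varepsilon$ and solving for $t$ yields $t=\sqrt{\log(2/\varepsilon)/(2n)}$, and then the complementary event
\begin{eqnarray*}
\left|\overline{X}_n-\mu\right|\leq \sqrt{\frac{\log(2/\varepsilon)}{2n}}
\end{eqnarray*}
has probability at least $1-\varepsilon$.

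There is no real obstacle here: the entire content lies in the previous Hoeffding corollary, and this is a routine reparametrization turning a tail probability bound into a confidence-radius statement. The only thing worth flagging is the harmless nonnegativity condition $\mu\geq 0$ in the hypotheses of Theorem \ref{H}, which is automatic under $0\leq X_j\leq 1$, so no additional argument is needed.
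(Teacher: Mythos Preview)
Your proposal is correct and matches the paper's approach: the paper simply states this result as an immediate special case of the preceding commutative Hoeffding inequality with $a=0$, $b=1$, and your reparametrization $2e^{-2nt^2}=\varepsilon$ is exactly the routine step that is being taken for granted.
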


\section{Rosenthal inequality}

In this section we intend to prove a noncommutative Rosenthal inequality by using our noncommutative Bennet inequality. Our argument seems to be simpler than that of \cite{HU} for the case of usual random variables. We should notify that there is a refinement of it in the literature in which various approach is used, see \cite[Theorem 0.4]{JZ} and \cite{DIR}.
\begin{theorem}
Let $1\leq r\leq2\leq p<\infty, \mathfrak{N}\subseteq\mathfrak{A}_j\subseteq\mathfrak{M}$ be successively independent over $\mathfrak{N}$ and $x_j\in\mathfrak{A}_j$ be self-adjoint such that $\mathcal{E}_{\mathfrak{N}}(x_j)=0$. Then there exists a constant $C(p, r)$ such that
\begin{eqnarray*}
\left\|\sum_{j=1}^nx_j\right\|_p^p\leq C(p, r)\left\{\sum_{j=1}^n\|x_j\|_p^p+\left(\sum_{j=1}^n\|x_j^r\|\right)^{\frac{p}{r}}\right\}.
\end{eqnarray*}
\end{theorem}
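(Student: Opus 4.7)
The plan is to combine the layer-cake formula for the $L^p$-norm with Theorem \ref{B} via a truncation at the integration level. Writing
\[
\Bigl\|\sum_{j=1}^n x_j\Bigr\|_p^p = p\int_0^\infty t^{p-1}\,{\rm Prob}\Bigl(\Bigl|\sum_{j=1}^n x_j\Bigr|\geq t\Bigr)\,dt,
\]
for each $t>0$ I decompose $x_j = y_j(t) + z_j(t)$ with $y_j(t) = x_j\chi_{[-t,t]}(x_j)$ (hence $\|y_j(t)\|\leq t$) and $z_j(t) = x_j - y_j(t)$, and center: $\tilde y_j = y_j(t) - \mathcal{E}_{\mathfrak{N}}(y_j(t))$ and $\tilde z_j = z_j(t) - \mathcal{E}_{\mathfrak{N}}(z_j(t))$. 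Since $\mathcal{E}_{\mathfrak{N}}(x_j)=0$, one has $\sum_j x_j = \sum_j\tilde y_j + \sum_j\tilde z_j$ with both pieces centered and $\|\tilde y_j\|\leq 2t$. The noncommutative spectral subadditivity $\chi_{[t,\infty)}(|A+B|)\leq\chi_{[t/2,\infty)}(|A|)\vee\chi_{[t/2,\infty)}(|B|)$ for self-adjoint $A,B$ (proved by noting that any $\xi$ orthogonal to both right-hand projections satisfies $\|A\xi\|, \|B\xi\|<(t/2)\|\xi\|$, hence $\|(A+B)\xi\|<t\|\xi\|$), combined with Lemma \ref{NE}(iv) and \eqref{tc}, yields
\[
{\rm Prob}\Bigl(\Bigl|\sum_j x_j\Bigr|\geq t\Bigr) \leq {\rm Prob}\Bigl(\Bigl|\sum_j\tilde y_j\Bigr|\geq \tfrac{t}{2}\Bigr) + {\rm Prob}\Bigl(\Bigl|\sum_j\tilde z_j\Bigr|\geq \tfrac{t}{2}\Bigr).
\]

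Next I apply Theorem \ref{B} (to both $\pm\sum_j\tilde y_j$) with $M=2t$, parameter $r$, and $b_j^r$ taken to be a $(p,r)$-constant multiple of $\|\mathcal{E}_{\mathfrak{N}}(|x_j|^r)\|$. The required operator estimate $\mathcal{E}_{\mathfrak{N}}(|\tilde y_j|^r)\leq C_r\mathcal{E}_{\mathfrak{N}}(|x_j|^r)$ follows from $|a-b|^r\leq 2^{r/2}(|a|^r+|b|^r)$ for self-adjoint $a,b$ and $r\in[1,2]$ (a consequence of $(a-b)^2\leq 2(a^2+b^2)$ together with operator monotonicity of $s\mapsto s^{r/2}$), the operator Jensen inequality $(\mathcal{E}_{\mathfrak{N}}(|y_j(t)|))^r\leq\mathcal{E}_{\mathfrak{N}}(|y_j(t)|^r)$ (valid because $x\mapsto x^r$ is operator convex on $[0,\infty)$ for $r\in[1,2]$), and the spectral estimate $|y_j(t)|^r\leq|x_j|^r$. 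To the second summand I apply the $L^1$-Markov inequality combined with the $L^1$-contractivity of $\mathcal{E}_{\mathfrak{N}}$:
\[
{\rm Prob}\Bigl(\Bigl|\sum_j\tilde z_j\Bigr|\geq \tfrac{t}{2}\Bigr)\leq \frac{4}{t}\sum_j\tau(|x_j|\chi_{|x_j|>t}).
\]

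Finally I integrate both pieces against $pt^{p-1}dt$. The tail piece reduces by Fubini to $\frac{4p}{p-1}\sum_j\|x_j\|_p^p$, producing the first term of the Rosenthal bound; the Bennett piece should yield the $\bigl(\sum_j\|x_j^r\|\bigr)^{p/r}$ term after a change of variables reducing to a gamma-function integral. The main obstacle is this last integration: with the moving truncation $M=2t$, a naive Bernstein weakening $\Phi(\alpha)\geq\alpha^2/(2+2\alpha/3)$ of the Bennett exponent $-\frac{b}{M^r}\Phi(tM^{r-1}/b)$ collapses to a constant floor in the subexponential regime and is not integrable against $t^{p-1}dt$. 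One must instead exploit the full logarithmic growth $\Phi(\alpha)\sim\alpha\log\alpha$ as $\alpha\to\infty$, which produces super-polynomial decay in $t$; alternatively, one can fix the truncation level at a $(p,r)$-constant multiple of $b^{1/r}$ and absorb the resulting overshoot into the tail piece. Carrying this step out with constants depending only on $p$ and $r$ is the technical heart of the argument, and combining it with the tail estimate gives the claimed inequality.
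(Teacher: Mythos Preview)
Your overall architecture (layer-cake formula, truncation, Bennett on the bounded part, tail estimate on the rest) is the same as the paper's, but there is a genuine gap at exactly the point you flag as the ``main obstacle,'' and neither of your two proposed fixes closes it. With your truncation at level $t$ (hence $M=2t$), write $\alpha = 2^{r-2}t^r/b$; the Bennett exponent is $-\tfrac{b}{M^r}\Phi(\alpha)=-\tfrac{1}{4\alpha}\Phi(\alpha)$. Even using the full $\Phi$, one has $\Phi(\alpha)/\alpha\sim\log\alpha$, so the tail decays only like $\alpha^{-1/4}\sim t^{-r/4}$, which is never integrable against $t^{p-1}$ for $p\geq 2$. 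Thus ``exploiting the logarithmic growth of $\Phi$'' does not rescue the choice $M=2t$. Your alternative, fixing the truncation at a multiple of $b^{1/r}$, makes the Bennett piece fine but your $L^1$--Markov tail bound $\tfrac{4}{t}\sum_j\tau(|x_j|\chi_{\{|x_j|>s\}})$ then no longer depends on $t$ through $s$, and $\int^\infty t^{p-2}\,dt$ diverges.

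The paper resolves this by truncating at the moving level $s=t/\gamma$ with a \emph{large} parameter $\gamma>p/r$: the Bennett exponent then carries an extra factor $\gamma$, so after the weakening $\Phi(\alpha)\geq\alpha\log(1+\alpha)-\alpha$ one gets a tail $e^{\gamma}(1+t^r/(\gamma^{r-1}B))^{-\gamma}$, and the substitution $\beta=t^r/(\gamma^{r-1}B)$ yields a convergent Beta-type integral contributing the $(\sum_j\|x_j^r\|)^{p/r}$ term, while the truncation tail integrates to $\gamma^p\sum_j\|x_j\|_p^p$. A second simplification you miss: the paper truncates \emph{one-sidedly}, $y_j=x_j\chi_{(-\infty,s]}(x_j)\leq x_j$, so that $\mathcal{E}_{\mathfrak{N}}(y_j)\leq\mathcal{E}_{\mathfrak{N}}(x_j)=0$ automatically---no recentering, no operator-Jensen argument needed; the two-sided bound then comes from \eqref{tc} applied at the end. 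Finally, a minor point: your ``spectral subadditivity'' $\chi_{[t,\infty)}(|A+B|)\leq\chi_{[t/2,\infty)}(|A|)\vee\chi_{[t/2,\infty)}(|B|)$ is false as an operator inequality; your argument actually shows the meet with the orthocomplement is zero, which by Lemma~\ref{NE}(ii) gives only $\prec$, and then Lemma~\ref{NE}(iii)--(iv) yields the trace inequality you use. The paper proceeds in exactly this way.
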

\begin{proof}
We use the noncommutative Bennett inequality, but for this end, we replace $\Phi(\alpha)$ by $\alpha\log(1+\alpha)-\alpha$, which is clearly smaller
than $\Phi(\alpha)$ for any $\alpha\geq0$.

Let us fix a number $s\geq0$ and consider $y_j=x_j\chi_{(-\infty,s]}(x_j)\in \mathfrak{A}_j$. It follows from $y_j\leq x_j$ and the positivity of $\mathcal{E}_{\mathfrak{N}}$ that $\mathcal{E}_{\mathfrak{N}}(y_j)\leq\mathcal{E}_{\mathfrak{N}}(x_j)=0$. In addition, $\mathcal{E}_{\mathfrak{N}}$ is norm decreasing, so
\begin{eqnarray*}
\sum_{j=1}^n\mathcal{E}_{\mathfrak{N}}(|x_j|^r)\leq \sum_{j=1}^n\|x_j^r\|:=B\quad {\rm and} \quad b:=\sum_{j=1}^n\|y_j^r\|\geq
\sum_{j=1}^n\mathcal{E}_{\mathfrak{N}}(|y_j|^r).
\end{eqnarray*}
Further, $b \leq B$ since $\|y_j\|\leq \|x_j\|\leq M$, where $M:=\max_{1\leq j\leq n}\|x_j\|$.
It follows from the noncommutative Bennett inequality \eqref{MOS2} that
\begin{align}\label{T}
\tau\left(\chi_{[t,\infty)}\sum_{j=1}^ny_j\right)&\leq \exp\left\{\frac{-b}{M^r}\Phi\left(\frac{tM^{r-1}}{b}\right)\right\}\\\nonumber
&\leq \exp\left\{\frac{-b}{M^r}\left[\frac{tM^{r-1}}{b}\log\left(1+\frac{tM^{r-1}}{b}\right)-\frac{tM^{r-1}}{b}\right]\right\}\\\nonumber
&\leq\exp\left\{-\frac{t}{M}\left(\log\left(1+\frac{tM^{r-1}}{B}\right)-1\right)\right\}
\end{align}
for all $t>0$. We have
\begin{eqnarray}\label{P}
\chi_{[t,\infty)}\left(\sum_{j=1}^nx_j\right)\prec \chi_{[t,\infty)}\left(\sum_{j=1}^ny_j\right)\vee\left(\vee_{j=1}^n\chi_{[s,\infty)}(x_j)\right)
\end{eqnarray}
for all $t>0$. To show this we have to prove that
\begin{eqnarray*}
\chi_{[t,\infty)}\left(\sum_{j=1}^nx_j\right)\wedge\left(\chi_{[0,t)}\left(\sum_{j=1}^ny_j\right)\wedge
\left(\wedge_{j=1}^n\chi_{(-\infty,s)}(x_j)\right)\right)=0.
\end{eqnarray*}
Let $\xi\in\chi_{[t,\infty)}\left(\sum_{j=1}^nx_j\right)(\mathfrak{H})\bigcap\left(\chi_{[0,t)}\left(\sum_{j=1}^ny_j\right)(\mathfrak{H})\bigcap
\left(\cap_{j=1}^n\chi_{(-\infty,s)}(x_j)(\mathfrak{H})\right)\right)$. Then
$$\left\langle\left(\sum_{j=1}^nx_j\right)\xi,\xi\right\rangle\geq t$$ and
\begin{eqnarray*}
t>\left\langle\left(\sum_{j=1}^ny_j\right)\xi,\xi\right\rangle&=&\left\langle\left(\sum_{j=1}^nx_j\chi_{(-\infty,s)}(x_j)\right)\xi,\xi\right\rangle=\left\langle\left(\sum_{j=1}^nx_j\right)\xi,\xi\right\rangle\geq t
\end{eqnarray*}
since $\xi\in\cap_{j=1}^n\chi_{(-\infty,s)}(x_j)(\mathfrak{H})$.
Therefore
\begin{eqnarray*}
\chi_{[t,\infty)}\left(\sum_{j=1}^nx_j\right)\wedge\left(\chi_{[0,t)}\left(\sum_{j=1}^ny_j\right)\wedge
\left(\wedge_{j=1}^n\chi_{(-\infty,s)}(x_j)\right)\right)=0.
\end{eqnarray*}
We deduce from Lemma \ref{NE}(ii) that
\begin{eqnarray*}
\chi_{[t,\infty)}\left(\sum_{j=1}^nx_j\right)\prec\left(\chi_{[0,t)}\left(\sum_{j=1}^ny_j\right)\wedge
\left(\wedge_{j=1}^n\chi_{(-\infty,s)}(x_j)\right)\right)^{\perp}.
\end{eqnarray*}
and this gives us inequality \eqref{P}.\\
Using \eqref{P}, we get
\begin{eqnarray*}
\tau\left(\chi_{[t,\infty)}\left(\sum_{j=1}^nx_j\right)\right)
&\leq& \tau\left(\chi_{[t,\infty)}\left(\sum_{j=1}^ny_j\right)\right)+\tau\left(\vee_{j=1}^n\chi_{[s,\infty)}(x_j)\right)\\
&&\qquad\qquad\qquad\qquad ({\rm by~ Lemma~ \ref{NE}(ii-iii)})\\
&\leq&\exp\left\{-\frac{t}{M}\left(\log\left(1+\frac{tM^{r-1}}{B}\right)-1\right)\right\}\\
&&\quad+\sum_{j=1}^n\tau\left(\chi_{[s,\infty)}(x_j)\right)\\
&&\qquad\qquad\qquad \quad ({\rm by~ Lemma~ \ref{NE}(iv)}, ~\eqref{T})
\end{eqnarray*}
for any $t>0$. An easy investigation shows that the latter inequality holds if we replace $M$ by any number $L$ with $L\geq M$. In addition, it holds for all $0<L \leq M$ since the function
 $$f(\alpha)=\exp\left\{-\frac{t}{\alpha}\left(\log\left(1+\frac{t\alpha^{r-1}}{B}\right)-1\right)\right\}$$
is decreasing for any $\alpha>0$. Thus
 \begin{align}\label{IN1}
\tau\left(\chi_{[t,\infty)}\left(\sum_{j=1}^nx_j\right)\right)
\leq \exp\left\{-\frac{t}{L}\left(\log\left(1+\frac{tL^{r-1}}{B}\right)-1\right)\right\}+\sum_{j=1}^n\tau\left(\chi_{[s,\infty)}(x_j)\right)
\end{align}
for all $t>0, s>0, L>0$.

Next, we deal with the modulus of $\sum_{j=1}^nx_j$. Inequality \eqref{IN1} together with \eqref{tc} imply that
\begin{eqnarray*}
&&\hspace{-1cm}\tau\left(\chi_{[t,\infty)}\left(\left|\sum_{j=1}^nx_j\right|\right)\right)\\
&\leq&2\exp\left\{-\frac{t}{L}\left(\log\left(1+\frac{tL^{r-1}}{B}\right)-1\right)\right\}+ \sum_{j=1}^n\tau\left(\chi_{[s,\infty)}(|x_j|)\right)\,.
\end{eqnarray*}
Now, by putting first $L=s$ and $s=\frac{t}{\gamma}$, where $\gamma>0$ we obtain that
\begin{eqnarray*}
&&\hspace{-1cm}\tau\left(\chi_{[t,\infty)}\left(\left|\sum_{j=1}^nx_j\right|\right)\right)\\
&\leq&
2\exp\left\{-\gamma\left(\log\left(1+\frac{t^r}{\gamma^{r-1} B}\right)-1\right)\right\}+ \sum_{j=1}^n\tau\left(\chi_{[t,\infty)}\left(\gamma|x_j|\right)\right).
\end{eqnarray*}
For $p\geq2$, we have
\begin{align}\label{IN2}
&\hspace{-1cm}\left\|\sum_{j=1}^nx_j\right\|_p^p\nonumber\\
&=\int_0^{\infty}pt^{p-1}\tau\left(\chi_{[t,\infty)}\left(\left|\sum_{j=1}^nx_j\right|\right)\right)dt\nonumber\\
&\leq \sum_{j=1}^n\int_0^{\infty}pt^{p-1}\tau\left(\chi_{[t,\infty)}(\gamma\left|x_j\right|)\right)dt\nonumber\\
&\quad+
2p\int_0^{\infty}t^{p-1}\exp\left\{-\gamma\left(\log\left(1+\frac{t^r}{\gamma^{r-1}B}\right)-1\right)\right\}dt\nonumber\\
&\leq \gamma^p\sum_{j=1}^n\|x_j\|_p^p+\frac{2p}{r}(\gamma^{r-1} B)^{\frac{p}{r}}e^{\gamma}\int_0^{\infty}\beta^{\frac{p-r}{r}}(1+\beta)^{-\gamma}d\beta\,,
\end{align}
where we use the change of variable $\beta=\frac{t^r}{\gamma^{r-1} B}$.
Next let $\gamma$ be such that the last integral of the above inequality is convergent, i.e. let us choose $\gamma>\frac{p}{r}$. With this choice
of $\gamma$ inequality \eqref{IN2} implies the Rosenthal inequality with
\begin{eqnarray*}
C(p, r)=\max\left\{ \gamma^p, \frac{2p}{r}\gamma^{\frac{p(r-1)}{r}}e^{\gamma}\int_0^{\infty}\beta^{\frac{p-r}{r}}(1+\beta)^{-\gamma}d\beta\right\}.
\end{eqnarray*}
\end{proof}
In the commutative setting, we have
\begin{corollary}
Let $1\leq r\leq2\leq p<\infty$ and $X_1, \cdots, X_n$ be independent real random variables with expected values $\mathbb{E}(X_j)=0\,\,(j=1, \cdots, n)$. Then
\begin{eqnarray*}
\mathbb{E}\left(\left|\sum_{j=1}^nX_j\right|^p\right)\leq C(p)\left\{\sum_{j=1}^n\mathbb{E}\left(|X_j|^p\right)+\left(\sum_{j=1}^n\mathbb{E}\left(|X_j|^r\right)\right)^{p/r}\right\},
\end{eqnarray*}
where $C(p)=\displaystyle{\min_{\gamma>p/2}}\max\left\{ \gamma^p, \frac{2p}{r}\gamma^{\frac{p(r-1)}{r}}e^{\gamma}\int_0^{\infty}\beta^{\frac{p-r}{r}}(1+\beta)^{-\gamma}d\beta\right\}$.
\end{corollary}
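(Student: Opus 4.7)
The plan is to derive this corollary as a direct specialization of the noncommutative Rosenthal theorem just proved, by realizing classical probability inside the operator-algebraic framework of the paper. First I would take $(\mathfrak{M},\tau)=(L^{\infty}(\Omega,\mathcal{F},\mathbb{P}),\mathbb{E})$, which is a (commutative) noncommutative probability space since $\mathbb{E}$ is a faithful normal finite trace on $L^{\infty}(\Omega)$, and choose $\mathfrak{N}=\mathbb{C}\cdot 1$ so that the conditional expectation $\mathcal{E}_{\mathfrak{N}}$ coincides with $\mathbb{E}$. Letting $\mathfrak{A}_j$ be the von Neumann subalgebra generated by $X_j$, the hypothesis that $X_j$ is real-valued makes it self-adjoint in $\mathfrak{A}_j$, and $\mathcal{E}_{\mathfrak{N}}(X_j)=\mathbb{E}(X_j)=0$ by assumption.

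Next I would verify that the successive independence of $\mathfrak{A}_1,\dots,\mathfrak{A}_n$ over $\mathbb{C}$ is equivalent to the classical mutual independence of $X_1,\dots,X_n$. When $\mathfrak{N}=\mathbb{C}$, the defining condition $\mathcal{E}_{\mathfrak{N}}(fg)=\mathcal{E}_{\mathfrak{N}}(f)\mathcal{E}_{\mathfrak{N}}(g)$ for $f\in\mathfrak{M}(k)$ and $g\in\mathfrak{A}_{k+1}$ becomes $\mathbb{E}(fg)=\mathbb{E}(f)\mathbb{E}(g)$ for all bounded Borel functions $f=f(X_1,\dots,X_k)$ and $g=g(X_{k+1})$, which is a standard equivalent formulation of mutual independence. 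Hence all hypotheses of the noncommutative Rosenthal theorem are met.

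The final step is to apply that theorem and identify the resulting quantities. One has $\|X_j\|_p^p=\tau(|X_j|^p)=\mathbb{E}(|X_j|^p)$, which handles the first summand immediately. For the $r$-th moment term, I would revisit the proof of the noncommutative theorem: the quantity $B=\sum_j\|x_j^r\|$ entered only through the bound $\mathcal{E}_{\mathfrak{N}}(|y_j|^r)\leq\|y_j^r\|\cdot 1$, and in the commutative case with $\mathfrak{N}=\mathbb{C}$ one has the sharper scalar identity $\mathcal{E}_{\mathfrak{N}}(|y_j|^r)=\mathbb{E}(|y_j|^r)\leq\mathbb{E}(|X_j|^r)$. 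So the same argument goes through verbatim with $B:=\sum_j\mathbb{E}(|X_j|^r)$, yielding the stated inequality; minimizing over the admissible range $\gamma>p/r$ then produces the displayed constant $C(p)$. The only step requiring genuine attention is this reinterpretation of $\|x_j^r\|$ as $\mathbb{E}(|X_j|^r)$ in the commutative setting; once it is justified by inspection of the noncommutative proof, the remainder is a routine translation of language.
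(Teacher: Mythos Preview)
Your approach is correct and matches the paper's intent: the corollary is stated without proof as the commutative specialization of the preceding noncommutative Rosenthal theorem, and your setup $(\mathfrak{M},\tau)=(L^{\infty}(\Omega),\mathbb{E})$, $\mathfrak{N}=\mathbb{C}$, $\mathfrak{A}_j=W^*(X_j)$ is exactly the standard dictionary. You are also right to flag the one nontrivial point: a literal application of the theorem statement would produce $\sum_j\|X_j^r\|_\infty$ rather than $\sum_j\mathbb{E}(|X_j|^r)$, and your fix---re-running the proof with $B:=\sum_j\mathbb{E}(|X_j|^r)$, using that for $\mathfrak{N}=\mathbb{C}$ the conditional expectation $\mathcal{E}_{\mathfrak{N}}(|y_j|^r)=\mathbb{E}(|y_j|^r)\leq\mathbb{E}(|X_j|^r)$ is already a scalar bound---is the correct way to obtain the stated form. (Your range $\gamma>p/r$ is also the correct convergence condition for the integral; the $\gamma>p/2$ in the displayed constant appears to be a typo unless $r=2$.)
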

\bigskip

\textbf{Acknowledgment.} We thank S. Dirksen for pointing out the reference \cite{DIR} to our attention.

\bigskip

\end{document}